\newtheorem{theorem}{Theorem}
\newtheorem{lemma}{Lemma}
\newtheorem{proposition}{Proposition}
\newtheorem{remark}{Remark}
\newtheorem{definition}{Definition}
\theoremstyle{remark}
\newtheorem{example}{\textbf{Example}}
\title{Groups and monoids of Pythagorean triples connected to conics}
\author{Marco Abrate, Stefano Barbero, Umberto Cerruti, Nadir Murru\\
Department of Mathematics \\ University of Turin \\ Via Carlo Alberto 8/10, Turin, Italy\\ marco.abrate@unito.it\\stefano.barbero@unito.it\\umberto.cerruti@unito.it\\nadir.murru@unito.it}
\date{}
\begin{document}

\maketitle

\begin{abstract}
We define operations that give the set of all Pythagorean triples a structure of commutative monoid. In particular, we define these operations by using injections between integer triples and $3 \times 3$ matrices. Firstly, we completely characterize these injections that yield commutative monoids of integer triples. Secondly, we determine commutative monoids of Pythagorean triples characterizing some Pythagorean triple preserving matrices. Moreover, this study offers unexpectedly an original connection with groups over conics. Using this connection, we determine groups composed by Pythagorean triples with the studied operations.
\end{abstract}

\textbf{Keywords:} conics, Pythagorean groups, Pythagorean monoids, Pythagorean triples, Pythagorean triple preserving matrices\\
\textbf{MSC:} 11A99

\section{Introduction}
Algebraic structures, like groups and monoids, are widely studied in mathematics for their importance. It is very interesting to give particular sets an alegebraic structure, in order to deepen their study. The set of Pythagorean triples and their properties have been extensively studied due to the interest that Pythagorean triples have aroused during the years, supporting mathematicians to continue their study. In spite of these studies, it is an hard work to give Pythagorean triples algebraic structures. In \cite{Tau} and \cite{Eck}, an operation over Pythagorean triples has been studied, showing a group of primitive Pythagorean triples. In \cite{Bea} and \cite{Som}, a monoid structure has been provided.

In this paper, we show an original approach that allows to characterize a family of operations which determine commutative monoids of Pythagorean triples. In section \ref{sec:mono}, we define injections, which we call \emph{natural}, between integer triples and $3 \times 3$ matrices. We use these injections to define products between integer triples, focusing on Pythagorean ones. In section \ref{sec:mono-mat}, we characterize matrices generating natural injections that determine commutative monoids over $\mathbb Z^3$. Similarly, in section \ref{sec:mono-pyt}, we characterize a subset of Pythagorean triple preserving matrices, such that the set of all Pythagorean triples has a commutative monoid structure. Finally, in section \ref{sec:group}, we find a surprisingly connection between Pythagorean triples and conics. In this way, we determine a group composed by Pythagorean triples by means of a morphism between points over conics and Pythagorean triples. Section \ref{sec:conc} is devoted to conclusions.

\section{Commutative monoids of Pythagorean triples}\label{sec:mono}
Let $\mathcal P$ be the set of Pythagorean triples
$$\mathcal P= \{  (x,y,z)\in\mathbb Z^3 : x^2+y^2=z^2 \}.$$
Several operations can be defined over this set. In particular, Taussky \cite{Tau} and Eckert \cite{Eck} studied the following operation
$$(x_1,y_1,z_1)\cdot(x_2,y_2,z_2)=(x_1x_2-y_1y_2,x_1y_2+y_1x_2,z_1z_2)$$
which defines a structure of free abelian group over primitive Pythagorean triples. In \cite{Zan}, this result has been generalized from $\mathbb Z$ to any ring of integers of an algebraic number field. In \cite{Bea} and \cite{Som}, the following operation has been studied
$$(x_1,y_1,z_1)\cdot(x_2,y_2,z_2)=(x_1x_2,y_1z_2+y_2z_1,y_1y_2+z_1z_2),$$
determining a commutative monoid (i.e., a commutative semigroup with identity) over $\mathcal P$.

Here, we study new operations on Pythagorean triples starting from an injection between triples and matrices. These operations provide new commutative monoid structures over $\mathcal P$. Let $\nu$ be an injective function
$$\nu: \mathbb Z^3 \rightarrow M_3,$$
where $M_3$ is the set of $3\times3$ matrices whose entries belong to $\mathbb Z$. Clearly, $\nu$ can be used to induce products over $\mathbb Z^3$. Let us denote with ''.'' the usual matrix product. One natural way to induce a product over $\mathbb Z^3$ is given by
$$\textbf{a}\bullet_1\textbf{b}=\nu^{-1}(\nu(\textbf{a}).\nu(\textbf{b})), \quad\forall \textbf{a},\textbf{b}\in\mathbb Z^3.$$
A second natural way is given by
$$\textbf{a}\bullet_2\textbf{b}=\nu(\textbf{a}).\textbf{b},\quad\forall \textbf{a},\textbf{b}\in\mathbb Z^3.$$
Let us consider the following definition.
\begin{definition}\label{natural}
We say that injection $\nu$ is \emph{natural} when
\begin{enumerate}
\item $\nu(\nu(\textbf{a}).\textbf{b})=\nu(\textbf{a}).\nu(\textbf{b})$, $\forall \textbf{a},\textbf{b}\in\mathbb Z^3$
\item $\nu(\textbf{a}).\textbf{b}=\nu(\textbf{b}).\textbf{a}$, $\forall \textbf{a},\textbf{b}\in\mathbb Z^3$
\item $\nu(1,0,1)=I_3$ (identity matrix)
\end{enumerate}
\end{definition}
\begin{remark}
We consider an element $\textbf{a}\in\mathbb Z^3$ like a column vector or a row vector in order that products between $\textbf{a}$ and matrices are consistent.
\end{remark}
In the following, we are interested in natural $\nu$. Thus, we will consider the induced product $\bullet=\bullet_1=\bullet_2$, i.e.,
$$\textbf{a}\bullet \textbf{b}=\nu(\textbf{a}).\textbf{b}=\nu^{-1}(\nu(\textbf{a}).\nu(\textbf{b})),\quad \forall \textbf{a},\textbf{b}\in\mathbb Z^3.$$
If $\nu$ is natural, then $(\mathbb Z^3,\bullet)$ is a commutative monoid, whose identity is $(1,0,1)$.

We also want to determine when the product $\bullet$ preserves Pythagorean triples, i.e.,
$$\forall \textbf{a},\textbf{b} \in\mathcal P, \quad \nu(\textbf{a}).\textbf{b}\in\mathcal P.$$
This study is clearly related to research Pythagorean triple preserving matrices.
\begin{definition}
A matrix $A$ is called a \emph{Pythagorean triple preserving matrix} (PTPM) if given a Pythagorean triple $\textbf{a}$, then $A.\textbf{a}$ is still a Pythagorean triple.
\end{definition}
In \cite{Pal1} and \cite{Pal2}, the authors characterized PTPMs. Precisely, they are of the form
$$\begin{pmatrix}  \frac{1}{2}(r^2-t^2-s^2+u^2) & rs-tu & \frac{1}{2}(r^2-t^2+s^2-u^2) \cr rt-su & ru+st & rt+su \cr \frac{1}{2}(r^2+t^2-s^2-u^2) & rs+tu & \frac{1}{2}(r^2+t^2+s^2+u^2) \end{pmatrix}$$
for some values of $r,s,t,u$. Further results can be found in \cite{Cra}. Moreover, Tikoo \cite{Tik} studied special cases of PTPMs provided by matrices
$$B_1=\begin{pmatrix} x & 0 & 0 \cr 0 & z & y \cr 0 & y & z \end{pmatrix} \quad B_2=\begin{pmatrix} y & 0 & 0 \cr 0 & z & x \cr 0 & x & z \end{pmatrix} \quad B_3=\begin{pmatrix} z & 0 & x \cr 0 & y & 0 \cr x & 0 & z \end{pmatrix}$$ 
$$B_4=\begin{pmatrix} z & 0 & y \cr 0 & x & 0 \cr y & 0 & z \end{pmatrix} \quad B_5=\begin{pmatrix} -x & y & 0 \cr y & x & 0 \cr 0 & 0 & z \end{pmatrix}$$
for $(x,y,z)$ a given Pythagorean triple. All these matrices induce products that preserve Pythagorean triples. In other words, we can consider, e.g., the injection
$$\nu_{B_1}:\mathbb Z^3 \rightarrow M_3,\quad \nu_{B_1}(x,y,z)=B_1.$$
Thus, the induced product $\bullet_{B_1}$ preserves Pythagorean triples, i.e.,
$$\forall \textbf{a}, \textbf{b}\in \mathcal P,\quad \textbf{a}\bullet_{B_1}\textbf{b}\in\mathcal P.$$
Moreover, with a bit of calculation, it is possible to check that $(\mathcal P,\bullet_{B_1})$ is a commutative monoid or equivalently $\nu_{B_1}$ is natural. Similarly, we can consider the injection $\nu_{B_2}$ and the product $\bullet_{B_2}$. Clearly, $\bullet_{B_2}$ preserves Pythagorean triples. However, $\bullet_{B_2}$ is not commutative nor associative and there is not the identity. In this case $(\mathcal P,\bullet_{B_2})$ is not a commutative monoid or equivalently $\nu_{B_2}$ is not natural.

Thus, it is natural asking when PTPMs determine commutative and associative products with identity. In the following we answer to this question finding a characterization for these matrices.

\section{Matrices yielding commutative monoids of triples}\label{sec:mono-mat}
In this section we determine all matrices that yield natural injections $\nu$. In other words, we determine all matrices such that $(\mathbb Z^3,\bullet)$ is a commutative monoid whose identity is (1,0,1). Using previous notation, we have that $\nu(x,y,z)\in M_3$ has entries depending on triple $(x,y,z)$.

We consider generic $3\times3$ matrices whose entries are functions of $(x,y,z)$:
$$\nu(x,y,z)=\begin{pmatrix} a(x,y,z) & b(x,y,z) & c(x,y,z) \cr d(x,y,z) & e(x,y,z) & f(x,y,z) \cr g(x,y,z) & h(x,y,z) & i(x,y,z)  \end{pmatrix}. $$
Triple $(1,0,1)$ is the identity with respect to $\bullet$ if and only if
$$(x,y,z)\bullet(1,0,1)=(1,0,1)\bullet(x,y,z)=(x,y,z),\quad \forall (x,y,z)\in\mathbb Z^3$$
i.e., if and only if
$$\nu(x,y,z).(1,0,1)=\nu(1,0,1).(x,y,z)=(x,y,z).$$
Direct calculations show that these equalities are satisfied if and only if
$$\nu(x,y,z)=\begin{pmatrix} a(x,y,z) & b(x,y,z) & x-a(x,y,z) \cr d(x,y,z) & e(x,y,z) & y-d(x,y,z) \cr g(x,y,z) & h(x,y,z) & z-g(x,y,z)  \end{pmatrix}.$$
Moreover, $\bullet$ is commutative if and only if
$$(x,y,z)\bullet(r,s,t)=(r,s,t)\bullet(x,y,z),\quad \forall (x,y,z), (r,s,t)\in \mathbb Z^3.$$
From this condition, we obtain the following relations:
\begin{equation*}
\begin{array}{*{20}c}
 a(x,y,z)r+b(x,y,z)s+(x-a(x,y,z))t &=& a(r,s,t)x+b(r,s,t)y+(r-a(r,s,t))z  \\
 d(x,y,z)r+e(x,y,z)s+(y-d(x,y,z))t &=& d(r,s,t)x+e(r,s,t)y+(s-d(r,s,t))z  \\
 g(x,y,z)r+h(x,y,z)s+(z-g(x,y,z))t &=& g(r,s,t)x+h(r,s,t)y+(t-g(r,s,t))z  \\
\end{array} 
\end{equation*}
In particular, if $(r,s,t)=(1,0,0)$ we obtain
\begin{equation} \label{comm1}
\begin{array}{*{20}c}
 a(x,y,z)&=& a(1,0,0)x+b(1,0,0)y+(1-a(1,0,0))z  \\
 d(x,y,z)&=& d(1,0,0)x+e(1,0,0)y-d(1,0,0)z  \\
 g(x,y,z)&=& g(1,0,0)x+h(1,0,0)y-g(1,0,0)z  \\
\end{array} 
\end{equation}
On the other hand, when $(r,s,t)=(0,1,0)$ we have
\begin{equation} \label{comm2}
\begin{array}{*{20}c}
 b(x,y,z)&=& a(0,1,0)x+b(0,1,0)y-a(0,1,0)z  \\
 e(x,y,z)&=& d(0,1,0)x+e(0,1,0)y+(1-d(0,1,0))z  \\
 h(x,y,z)&=& g(0,1,0)x+h(0,1,0)y-g(0,1,0)z  \\
\end{array} 
\end{equation}
From (\ref{comm1}), when $(x,y,z)=(0,1,0)$, we get
\begin{equation*}
\begin{array}{*{20}c}
 a(0,1,0)&=& b(1,0,0)\\
 d(0,1,0)&=& e(1,0,0)\\
 g(0,1,0)&=& h(1,0,0)\\
\end{array} 
\end{equation*}
These relations allow to rewrite equations (\ref{comm1}) and (\ref{comm2}) introducing the parameters
\begin{equation*}
\begin{array}{*{20}c}
\alpha=a(1,0,0)&\beta=a(0,1,0)=b(1,0,0)& \phi=b(0,1,0)\\
\delta=d(1,0,0)&\gamma=d(0,1,0)=e(1,0,0)&\rho=e(0,1,0)\\
\sigma=g(1,0,0)&\theta=g(0,1,0)=h(1,0,0)&\lambda=h(0,1,0)\\
\end{array}
\end{equation*}
and to finally find
\begin{equation*} \begin{pmatrix} \alpha (x - z) + \beta y + z & \beta (x - z) + \phi y & (1 - \alpha )(x - z) - \beta y \cr \delta (x - z) + \gamma y & \gamma (x - z) + \rho y + z &  - \delta (x - z) + (1 - \gamma )y \cr \sigma (x - z) + \theta y & \theta (x - z) + \lambda y & - \sigma (x - z) - \theta y + z \end{pmatrix}
\end{equation*}
Elements of $\nu(x,y,z)$ are all linear functions of $(x,y,z)$ depending on the introduced parameters. This fact is very useful in order to verify also condition 1 in Definition \ref{natural}. Indeed, we may only consider such a condition for triples $(1,0,0)$, $(0,1,0)$, and $(0,0,1)$. A (not so) little bit of calculation shows that parameters must satisfy the following system
\begin{equation*}
\begin{cases}
   \alpha \gamma  + (\rho  + \theta  - \beta )\delta  + (1 - \gamma )\sigma  = \gamma ^2 \cr
    - (\phi  - \lambda )\alpha  + \phi \gamma  - \lambda = \beta (\rho  + \theta  - \beta ) \cr
    - \sigma (\phi  - \lambda ) + \lambda \gamma  = \theta (\rho  + \theta  - \beta ) \cr
   \alpha \theta  + \delta \lambda  - \beta \sigma  = \theta \gamma  \cr
   \delta (\phi  - \lambda ) = \beta \gamma  - \theta \gamma  + \theta 
\end{cases}
\end{equation*}
We consider two cases: $\phi=\lambda$ and $\phi\neq \lambda$. 
When $\phi=\lambda$, we have
\begin{equation*}
\begin{cases}
   \alpha \gamma  + (\rho + \theta - \beta )\delta  + (1 - \gamma )\sigma  = \gamma ^2 \cr
   \phi \gamma  - \lambda  = \beta (\rho +\theta - \beta ) \cr
   \phi \gamma  = \theta(\rho+\theta-\beta) \cr
   \alpha\theta+\delta \phi  - \beta \sigma  = \theta\gamma\cr
   \beta \gamma=\theta(\gamma-1)
\end{cases}
\end{equation*}
In this case, when $\gamma\neq0$ we find the solution $\alpha=\frac{\gamma^3+\gamma^2\sigma-\gamma\sigma-\delta(\rho\gamma+\theta)}{\gamma^2}$, $\beta=\frac{\theta(\gamma-1)}{\gamma}$, $\phi=\frac{\theta(\rho\gamma+\theta)}{\gamma^2}$, corresponding to matrices
\tiny
\begin{equation*}
\mathcal A=\left[ {\begin{array}{*{20}c}
   {\frac{{\gamma ^3  + \sigma \gamma ^2  - \sigma \gamma  - \delta (\rho \gamma  + \theta )}}{{\gamma ^2 }}(x - z) + \frac{{\theta (\gamma -1 )}}{\gamma }y + z} & {\frac{{\theta (\gamma - 1 )}}{\gamma }(x - z) + \frac{{\theta (\rho \gamma  + \theta )}}{{\gamma ^2 }}y} & {\left( { \frac{{\gamma^2-\gamma ^3  - \sigma \gamma ^2  + \sigma \gamma  + \delta (\rho \gamma  + \theta )}}{{\gamma ^2 }}} \right)(x - z) - \frac{{\theta (\gamma-1 )}}{\gamma }y}  \\
   {\delta (x - z) + \gamma y} & {\gamma (x - z) + \rho y + z} & { - \delta (x - z) + (1 - \gamma )y}  \\
   {\sigma (x - z) + \theta y} & {\theta (x - z) + \frac{{\theta (\rho \gamma  + \theta )}}{{\gamma ^2 }}y} & { - \sigma (x - z) - \theta y + z}  \\
\end{array}} \right]
\end{equation*}
\normalsize

\noindent On the other hand, if $\gamma=0$ we find solution $\theta=\gamma=0$, $\phi=\lambda=-\beta(\rho-\beta)$, $\sigma=-\delta(\rho-\beta)$ and corresponding matrices are
\begin{equation*}
\mathcal B=\left[ {\begin{array}{*{20}c}
   {\alpha (x - z) + \beta y + z} & {\beta (x - z) - \beta (\rho  - \beta )y} & {(1 - \alpha )(x - z) - \beta y}  \\
   {\delta (x - z)} & {\rho y + z} & { - \delta (x - z) + y}  \\
   { - \delta (\rho  - \beta )(x - z)} & { - \beta (\rho  - \beta )y} & {  \delta (\rho  - \beta )(x - z) + z}  \\
\end{array}} \right]
\end{equation*}
When $\phi\neq\lambda$ solution is $\alpha=\frac{\phi\gamma-\lambda-\beta(\rho+\theta-\beta)}{\phi-\lambda}$, $\delta=\frac{\beta\gamma-\theta\gamma+\theta}{\phi-\lambda}$, $\sigma=\frac{\lambda\gamma-\theta(\rho+\theta-\beta)}{\phi-\lambda}$, giving matrices
\footnotesize
\begin{equation*}
\mathcal C=\left[ {\begin{array}{*{20}c}
   {\frac{{\phi \gamma  - \lambda  - \beta (\rho+ \theta - \beta )}}{{\phi  - \lambda }}(x - z) + \beta y + z} & {\beta (x - z) + \phi y} & {\left( {\frac{{\phi (1 - \gamma ) + \beta (\rho +\theta - \beta )}}{{\phi  - \lambda }}} \right)(x - z) - \beta y}  \\
   {\frac{{\beta \gamma-\theta\gamma+\theta }}{{\phi  - \lambda }}(x - z) + \gamma y} & {\gamma (x - z) + \rho y + z} & { - \frac{{\beta \gamma -\theta\gamma+\theta}}{{\phi  - \lambda }}(x - z) + (1 - \gamma )y}  \\
   {\frac{{\lambda \gamma-\theta(\rho+\theta-\beta) }}{{\phi  - \lambda }}(x - z)+\theta y} & {\theta(x-z)+\lambda y} & { - \frac{{\lambda \gamma-\theta(\rho+\theta-\beta) }}{{\phi  - \lambda }}(x - z)-\theta y + z}  \\
\end{array}} \right]
\end{equation*}
\normalsize

\section{Matrices yielding commutative monoids of Pythagorean triples}\label{sec:mono-pyt}
In the previous section, we have found all matrices $\nu(x,y,z)$ such that injection $\nu$ is natural and consequently $(\mathbb Z^3,\bullet)$ is a commutative monoid. Now we want to show that a particular subset of PTPMs lies in one of families $\mathcal A$,$\mathcal B$ and $\mathcal C$. 

Let us consider the set $\mathcal{P}=\left\lbrace(x,y,z)\in \mathbb{Z}^3: x^2+y^2=z^2\right\rbrace$ and matrices $\nu(x,y,z)$, where $(x,y,z)\in \mathcal{P}$.
We recall that generic form of $\nu(x,y,z)$, with $(x,y,z)\in \mathcal{P}$, belonging to one of families $\mathcal A$, $\mathcal B$, $\mathcal C$, is 
\begin{footnotesize}
\begin{equation} \label{standard form}
\nu(x,y,z)=\begin{pmatrix} A_1(x - z)+B_1y + z & A_2(x - z)+B_2y & A_3(x - z) + B_3y \cr A_4(x - z) +B_4 y & A_5(x-z)+B_5y+z & A_6(x - z)+B_6y \cr A_7 (x - z) + B_7y & A_8(x - z)+B_8y & A_9(x - z)+B_9y + z \end{pmatrix}.
\end{equation}
\end{footnotesize}
\begin{lemma}
Necessary conditions for (\ref{standard form}) to be a PTPM are 
\begin{equation}\label{neccond}
B_1=B_5=B_9, \quad B_7=B_3, \quad B_8=B_6, \quad B_4=-B_2.
\end{equation}
\end{lemma}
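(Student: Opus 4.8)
The plan is to reformulate ``PTPM'' as a conformality condition for the quadratic form of signature $(2,1)$ and then read off a single coefficient from each of six quadratic expressions. Put $D=\operatorname{diag}(1,1,-1)$ and, for $\textbf{a},\textbf{b}\in\mathbb Z^3$, write $\langle\textbf{a},\textbf{b}\rangle=a_1b_1+a_2b_2-a_3b_3$, so that $\mathcal P=\{\textbf{a}:\langle\textbf{a},\textbf{a}\rangle=0\}$. Asking that $\nu(x,y,z)$ be a PTPM for every $(x,y,z)\in\mathcal P$ means: for all $(x,y,z),(r,s,t)\in\mathcal P$, the vector $\textbf{w}=\nu(x,y,z).(r,s,t)$ lies in $\mathcal P$. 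Fix $(x,y,z)\in\mathcal P$. Then $\langle\textbf{w},\textbf{w}\rangle$ is a quadratic form in $(r,s,t)$ that vanishes on the conic $r^2+s^2=t^2$; since $r^2+s^2-t^2$ is an irreducible quadratic form, any quadratic form vanishing on this conic is a scalar multiple of it (concretely, evaluating at $(1,0,\pm1)$, $(0,1,\pm1)$, $(3,4,5)$ already forces this). Hence $\langle\textbf{w},\textbf{w}\rangle=\mu(x,y,z)\,(r^2+s^2-t^2)$, and comparing the coefficients of $r^2,s^2,t^2,rs,rt,st$ shows this is equivalent to the six identities below, valid for all $(x,y,z)\in\mathcal P$, where $\textbf{n}_1,\textbf{n}_2,\textbf{n}_3$ denote the columns of $\nu(x,y,z)$:
\begin{equation*}
\langle\textbf{n}_1,\textbf{n}_1\rangle=\langle\textbf{n}_2,\textbf{n}_2\rangle=-\langle\textbf{n}_3,\textbf{n}_3\rangle=\mu(x,y,z),\qquad \langle\textbf{n}_1,\textbf{n}_2\rangle=\langle\textbf{n}_1,\textbf{n}_3\rangle=\langle\textbf{n}_2,\textbf{n}_3\rangle=0 .
\end{equation*}
(Equivalently, $\nu(x,y,z)^{T}D\,\nu(x,y,z)=\mu(x,y,z)D$.)

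Next I would change coordinates to $(u,y,z)$ with $u=x-z$: these are free coordinates, the entries of (\ref{standard form}) become $A_k u+B_k y$ plus $z$ on the diagonal, and $\mathcal P$ is the zero set of $x^2+y^2-z^2=u^2+2uz+y^2$, a polynomial with no $yz$-term. Each of the three off-diagonal expressions $\langle\textbf{n}_i,\textbf{n}_j\rangle$ is then a quadratic form in $(u,y,z)$ vanishing on $\mathcal P$, and each pairwise difference of the three diagonal ones likewise vanishes on $\mathcal P$; by the same irreducibility remark every such form is a scalar multiple of $u^2+2uz+y^2$ and therefore has zero $yz$-coefficient. So it remains only to compute the $yz$-coefficient of each $\langle\textbf{n}_i,\textbf{n}_j\rangle$: the only monomials that can produce $yz$ come from multiplying the diagonal $z$ of an entry by the $B_k y$ of the entry sitting in the matching row of the other column. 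One obtains $B_2+B_4$ from $\langle\textbf{n}_1,\textbf{n}_2\rangle$, $B_3-B_7$ from $\langle\textbf{n}_1,\textbf{n}_3\rangle$, $B_6-B_8$ from $\langle\textbf{n}_2,\textbf{n}_3\rangle$, and $2B_1$, $2B_5$, $-2B_9$ from $\langle\textbf{n}_1,\textbf{n}_1\rangle$, $\langle\textbf{n}_2,\textbf{n}_2\rangle$, $\langle\textbf{n}_3,\textbf{n}_3\rangle$ respectively. Setting the first three equal to $0$ gives $B_4=-B_2$, $B_7=B_3$, $B_8=B_6$, and requiring the $yz$-coefficients of $\langle\textbf{n}_1,\textbf{n}_1\rangle-\langle\textbf{n}_2,\textbf{n}_2\rangle$ and of $\langle\textbf{n}_1,\textbf{n}_1\rangle+\langle\textbf{n}_3,\textbf{n}_3\rangle$ to vanish gives $B_1=B_5$ and $B_1=B_9$, i.e. $B_1=B_5=B_9$.

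The only step that is not pure bookkeeping is the reduction ``a quadratic form vanishing on the conic $x^2+y^2=z^2$ is a scalar multiple of $x^2+y^2-z^2$'', which rests on the irreducibility of this form (equivalently, on the fact that a handful of explicit Pythagorean triples already pin the form down). I expect this to be the only point needing a word of justification; everything else is the careful identification of the $B_k$ with the entries of (\ref{standard form}) and the extraction of one coefficient from six quadratic products.
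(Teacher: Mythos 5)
Your proof is correct, but it is organized quite differently from the paper's. The paper proves the lemma by brute force: it substitutes the parametrizations $(m^2-n^2,2mn,m^2+n^2)$ and $(u^2-v^2,2uv,u^2+v^2)$ for both triples, expands the condition $\nu(\cdot).(\cdot)\in\mathcal P$ into a homogeneous quartic identity in $(m,n)$ whose coefficients involve $u,v$ and the $A_k,B_k$, isolates the $nm^3$ coefficient $N_1(u^2-v^2)+2N_2uv-N_3(u^2+v^2)=0$, and then expands this in $(u,v)$ to get five linear equations in the $B_k$ that collapse to \eqref{neccond}. You instead eliminate each quantifier structurally: quantifying over the acted-upon triple is replaced by the conformality identity $\nu(x,y,z)^T D\,\nu(x,y,z)=\mu(x,y,z)D$ with $D=\operatorname{diag}(1,1,-1)$, and quantifying over the matrix-defining triple is replaced by extracting the $yz$-coefficient in the coordinates $(u,y,z)=(x-z,y,z)$, where the cone becomes $u^2+2uz+y^2=0$. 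Both reductions rest on the one nontrivial ingredient absent from the paper's argument, namely that a ternary quadratic form vanishing on the Pythagorean cone must be a scalar multiple of $x^2+y^2-z^2$; your parenthetical justification via the five triples $(1,0,\pm1)$, $(0,1,\pm1)$, $(3,4,5)$ does close that gap (I checked the linear algebra, and your six $yz$-coefficients $B_2+B_4$, $B_3-B_7$, $B_6-B_8$, $2B_1$, $2B_5$, $-2B_9$ are right, with the differences $\langle\textbf{n}_1,\textbf{n}_1\rangle-\langle\textbf{n}_2,\textbf{n}_2\rangle$ and $\langle\textbf{n}_1,\textbf{n}_1\rangle+\langle\textbf{n}_3,\textbf{n}_3\rangle$ correctly eliminating the unknown factor $\mu$). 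What each approach buys: the paper's computation is elementary and self-contained, needing nothing beyond polynomial identities in four parameters, but it is opaque and extracts only one coefficient of a large expansion; your argument is shorter, explains \emph{why} the $B$-conditions appear (they are exactly the obstruction to the $yz$-terms in the conformality relations), and as a by-product records the full necessary structure $\nu^T D\nu=\mu D$, which would also be the natural starting point for constraining the $A_k$ as in the paper's subsequent theorem.
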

\begin{proof}
Using the well--known parametrization $x=m^2-n^2$, $y=2mn$ and $z=m^2+n^2$, matrix (\ref{standard form}) becomes
\begin{footnotesize}
\begin{equation*}
\begin{pmatrix} -2A_1n^2+2B_1mn + m^2+n^2 & -2A_2n^2+2B_2mn & -2A_3n^2 + 2B_3mn \cr -2A_4n^2+2B_4mn & -2A_5n^2+2B_5mn+m^2+n^2 & -2A_6n^2+2B_6mn \cr -2A_7n^2 + 2B_7mn & -2A_8n^2+2B_8mn & -2A_9n^2+2B_9mn + m^2+n^2  \end{pmatrix}.
\end{equation*}
\end{footnotesize}
We find, with a little bit of calculation, that condition 
$$\nu(m^2-n^2,2mn,m^2+n^2).(u^2-v^2,2uv,u^2+v^2)\in \mathcal{P}$$ 
is valid for all Pythagorean triples $(m^2-n^2,2mn,m^2+n^2)$ and $(u^2-v^2,2uv,u^2+v^2)$ if the following equality holds
\begin{footnotesize}
\begin{equation}\label{cond}
\begin{split}
[M_1^2+M_2^2-M_3^2-M_1(u^2-v^2)-2M_2uv+M_3(u^2+v^2)]n^4+\\
+[N_1(u^2-v^2)+2N_2uv-N_3(u^2+v^2)-2M_1N_1-2M_2N_2+2M_3N_3]n^3m+\\
+[N_1^2+N_2^2-N_3^2-M_1(u^2-v^2)-2M_2uv+M_3(u^2+v^2)]m^2n^2+\\
[N_1(u^2-v^2)+2N_2uv-N_3(u^2+v^2)]nm^3=0
\end{split}
\end{equation}
\end{footnotesize}
where, for $j=0,1,2$, we have
\begin{equation}\label{MN}\begin{cases}
M_{j+1}=A_{3j+1}(u^2-v^2)+2A_{3j+2}uv+A_{3j+3}(u^2+v^2)\cr N_{j+1}=B_{3j+1}(u^2-v^2)+2B_{3j+2}uv+B_{3j+3}(u^2+v^2) \end{cases}.\end{equation}
Equality \eqref{cond} is independent by the choice of involved triples. Thus, all coefficients within square brackets must be equal to 0. In particular, if we consider equality
\begin{equation}\label{N}
N_1(u^2-v^2)+2N_2uv-N_3(u^2+v^2)=0
\end{equation}
and we substitute the second relations of (\ref{MN}) into (\ref{N}) we find
\begin{equation*}
\begin{split}
(B_1+B_3-B_7-B_9)u^4+2(B_2+B_4+2B_6-2B_8)u^3v-2(B_1-2B_5+B_9)u^2v^2+\\-2(B_2+B_4-B_6+B_8)uv^3+(B_1-B_3+B_7-B_9)v^4=0
\end{split}
\end{equation*}
Since this equality must hold for all $u$ and $v$, coefficients within round brackets must be 0, leading to the system of equations
\begin{equation*}
\begin{cases}
B_1+B_3-B_7-B_9=0\\
B_2+B_4+2B_6-2B_8=0\\
B_1-2B_5+B_9=0\\
B_2+B_4-B_6+B_8=0\\
B_1-B_3+B_7-B_9=0.
\end{cases}
\end{equation*}
which easily gives necessary conditions (\ref{neccond}).
\end{proof}
Now, using previous lemma, we prove that PTPMs belong to family $\mathcal C$.

\begin{theorem}
The PTPMs $\nu(x,y,z)=M_{\beta,\gamma}(x,y,z)$, with $(x,y,z)\in \mathcal{P}$, are subset of the family $\mathcal C$ and they have the following form
\begin{tiny}
\begin{equation}\label{pythagorean}
\begin{pmatrix} (\gamma^2-\gamma +1-\beta^2)(x - z)+\beta y + z & \beta (x - z) - \gamma y & (\beta^2-\gamma^2+\gamma) (x - z)-\beta y \cr -\beta(2\gamma-1)(x - z) +\gamma y & \gamma (x-z)+\beta y+z & \beta(2\gamma-1)(x - z)+(1-\gamma)y \cr (\beta^2+\gamma^2-\gamma) (x - z)-\beta y & -\beta(x - z)+(1-\gamma)y & -( \beta^2+\gamma^2-\gamma)(x - z)+\beta y + z \end{pmatrix}
\end{equation}
\end{tiny}
\end{theorem}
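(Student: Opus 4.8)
The plan is to feed the classification of Section~\ref{sec:mono-mat} into the necessary conditions \eqref{neccond}. Since $\nu$ is natural, $\nu(x,y,z)$ must be one of the three explicit matrices $\mathcal A$, $\mathcal B$, $\mathcal C$ displayed at the end of Section~\ref{sec:mono-mat}; each of these is an instance of the standard form \eqref{standard form}, so if moreover $\nu(x,y,z)$ is a PTPM, then the $y$-coefficients $B_1,\dots,B_9$ of its entries must satisfy \eqref{neccond}. The first step is therefore to read off $B_1,\dots,B_9$ for each family directly from those matrices and to test \eqref{neccond} against the parameter restrictions under which the family was derived.

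This eliminates $\mathcal A$ and $\mathcal B$. For $\mathcal B$ one reads $B_1=\beta$, $B_5=\rho$, $B_9=0$, $B_6=1$, $B_8=-\beta(\rho-\beta)$; then $B_1=B_5=B_9$ forces $\beta=\rho=0$, whereupon $B_8=B_6$ becomes $0=1$. For $\mathcal A$ one reads $B_2=B_8=\theta(\rho\gamma+\theta)/\gamma^2$, $B_4=\gamma$, $B_6=1-\gamma$; then $B_4=-B_2$ gives $\theta(\rho\gamma+\theta)=-\gamma^3$ while $B_6=B_8$ gives $\theta(\rho\gamma+\theta)=\gamma^2-\gamma^3$, so $\gamma^2=0$, contradicting the standing hypothesis $\gamma\neq 0$ for family $\mathcal A$. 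Hence every natural PTPM on $\mathcal P$ lies in family $\mathcal C$.

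Inside $\mathcal C$ the $y$-coefficients are $B_1=\beta$, $B_2=\phi$, $B_3=-\beta$, $B_4=\gamma$, $B_5=\rho$, $B_6=1-\gamma$, $B_7=\theta$, $B_8=\lambda$, $B_9=-\theta$, so \eqref{neccond} reduces exactly to $\rho=\beta$, $\theta=-\beta$, $\lambda=1-\gamma$, $\phi=-\gamma$ (in particular $\phi-\lambda=-1$, so the genericity assumption $\phi\neq\lambda$ of family $\mathcal C$ is automatically satisfied). Substituting these four equalities into the general matrix $\mathcal C$ and simplifying the rational entries $A_1,\dots,A_9$ — the only mechanical step, made painless once the denominator $\phi-\lambda$ has collapsed to the constant $-1$ — turns the matrix into \eqref{pythagorean}, with $\beta$ and $\gamma$ as the two surviving parameters; this is the matrix $M_{\beta,\gamma}(x,y,z)$, and it establishes the asserted inclusion.

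For completeness one should also verify the reverse implication, that every $M_{\beta,\gamma}$ is genuinely a PTPM; here \eqref{neccond} only guarantees the vanishing of the $nm^3$-coefficient of \eqref{cond}, so it remains to check that the three remaining bracketed coefficients (those involving $M_1,M_2,M_3$) vanish identically in $u,v$ as well. Computing $M_j$ and $N_j$ from \eqref{MN} with the coefficients of \eqref{pythagorean} gives, for instance, $M_1=u^2+2\beta uv+(2\beta^2-2\gamma^2+2\gamma-1)v^2$, $M_2=2\gamma uv+2\beta(2\gamma-1)v^2$, $M_3=-2\beta uv-2(\beta^2+\gamma^2-\gamma)v^2$, and the needed quartic identities (e.g. $M_1^2+M_2^2-M_3^2=M_1(u^2-v^2)+2M_2uv-M_3(u^2+v^2)$) then drop out by direct expansion. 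I expect no conceptual obstacle: the main difficulty is purely organisational, namely keeping the bookkeeping straight in families $\mathcal A$ and $\mathcal C$, whose entries carry the clumsy denominators $\gamma^2$ and $\phi-\lambda$, and then confirming the handful of quartic identities in $u,v$ without a slip.
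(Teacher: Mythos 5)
Your proposal is correct and follows essentially the same route as the paper: impose the necessary conditions \eqref{neccond} on the families $\mathcal A$, $\mathcal B$, $\mathcal C$, observe that only $\mathcal C$ survives (with $\rho=\beta$, $\theta=-\beta$, $\lambda=1-\gamma$, $\phi=-\gamma$), and substitute to obtain \eqref{pythagorean}. The only minor difference is in the converse verification that $M_{\beta,\gamma}$ is genuinely a PTPM: the paper multiplies two parametrized triples and exhibits the result explicitly in the form $(p^2-q^2,\,2pq,\,p^2+q^2)$ (in both multiplication orders), whereas you check that the remaining bracketed coefficients of \eqref{cond} vanish identically --- an equivalent computation.
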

\begin{proof}
First we show that conditions (\ref{neccond}) are satisfied only by a subset of $\mathcal C$. Let us consider a matrix of the form $\mathcal A$. Conditions (\ref{neccond}) correspond to relations
$$\frac{\theta(\gamma-1)}{\gamma}=\rho=-\theta, \quad \theta=-\frac{\theta(\gamma-1)}{\gamma^2}, \quad \frac{\theta(\rho\gamma+\theta)}{\gamma^2}=1-\gamma, \quad \gamma=\frac{\theta(\rho\gamma+\theta)}{\gamma^2},$$
which are clearly inconsistent.

If we examine a matrix belonging to family $\mathcal B$, we have that conditions (\ref{neccond}) lead to equalities
$$\beta=\rho=0,\quad \beta=0, \quad -\beta(\rho-\beta)=1,\quad -\beta(\rho-\beta)=0,$$
and also in this case they are clearly inconsistent.

Finally, for matrices belonging to family $\mathcal C$, conditions (\ref{neccond}) are equivalent to 
$$\beta=\rho=-\theta,\quad \theta=-\beta, \quad \lambda=1-\gamma, \quad \phi=-\gamma.$$
They are compatible, providing matrices of the form (\ref{pythagorean}).
We can prove that these matrices are PTPMs considering two triples generated by $m,n$ and $u,v$.

We have
$$(m^2-n^2,2mn,m^2+n^2)\bullet(u^2-v^2,2uv,u^2+v^2)=(A,B,C)$$
where
$$A=(mu+(1-2\gamma)nv)^2-(mv+n(u+2\beta v))^2,$$
$$B=2(mu+(1-2\gamma)nv)(mv+n(u+2\beta v)),$$
$$C=(mu+(1-2\gamma)nv)^2+(mv+n(u+2\beta v))^2,$$
which is clearly a Pythagorean triple.

On the other hand 
$$(u^2-v^2,2uv,u^2+v^2). \nu(m^2-n^2,2mn,m^2+n^2)=(A',B',C')$$
where
$$A'=(m u + n v)^2 - (m v + n (u - 2 \gamma u + 2 \beta v))^2,$$
$$ B'=2(m u + n v) (m v + n (u - 2 \gamma u + 2 \beta v)),$$
$$ C'=(m u + n v)^2 + (m v + n (u - 2 \gamma u + 2 \beta v))^2,$$
which is clearly another Pythagorean triple.
\end{proof}

The previous theorem characterizes matrices $M_{\beta,\gamma}$, depending on two parameters $\beta$ and $\gamma$, such that 
$$\nu:\mathcal P\rightarrow M_3,\quad \nu(x,y,z)=M_{\beta,\gamma}(x,y,z)$$
is natural, i.e., said $\bullet_{\beta,\gamma}$ the induced product, $(\mathcal P,\bullet_{\beta,\gamma})$ is a commutative monoid.

\section{Products over conics and groups of Pythagorean triples}\label{sec:group}

Since we have the classical parametrization for the Pythagorean triples $(x,y,z)\in \mathcal{P}$ given by
$$\phi:\mathbb Z^2\rightarrow \mathcal P,\quad \phi(u,v)=(u^2-v^2,2uv,u^2+v^2),$$
matrices $M_{\beta,\gamma}(x,y,z)$ can be rewritten as
\begin{tiny}
$$M_{\beta,\gamma}(u,v)=\begin{pmatrix} u^2+2\beta uv+(2\beta^2-2\gamma^2+2\gamma-1)v^2 & -2v(\gamma u+\beta v) & 2(-\beta^2v-\beta u+\gamma(\gamma-1))v \cr -2(\beta v-\gamma(u+2\beta v))v & u^2+2\beta uv+(-2\gamma+1)v^2 & 2(-\gamma u+u+\beta v-2\beta\gamma v)v \cr -2(\beta^2v+\beta u+\gamma(\gamma-1)v)v & 2(-\gamma u+u+\beta v)v & u^2+2\beta uv+(2\beta^2+2\gamma^2-2\gamma+1)v^2 \end{pmatrix}.$$
\end{tiny}

Now, we define a product between points of $\mathbb{R}^2$ such that it is well--defined with respect to multiplication of matrices $M_{\beta,\gamma}(u,v)$.
\begin{definition} For any $(u,v),(s,t)\in\mathbb R^2$, we define
\begin{equation} (u,v)*_{\beta,\gamma}(s,t)=(su+tv(1-2\gamma),tu+sv+2\beta tv). \end{equation}
\end{definition}
The reader can easily prove that
$$M_{\beta,\gamma}(u,v).M_{\beta,\gamma}(s,t)=M_{\beta,\gamma}((u,v)*_{\beta,\gamma}(s,t)).$$

The determinant of $M_{\beta,\gamma}(u,v)$ is surprisingly $(u^2+2\beta uv-(1-2\gamma)v^2)^3$. Thus, function
$$\delta(M_{\beta,\gamma}(u,v))=u^2+2\beta uv-(1-2\gamma)v^2$$
is multiplicative, i.e.,
$$\delta(M_{\beta,\gamma}(u,v).M_{\beta,\gamma}(u',v'))=\delta(M_{\beta,\gamma}(u,v))\cdot\delta(M_{\beta,\gamma}(u',v')),$$
since it is the cubic root of the determinant. 

In this way, study of commutative monoids of Pythagorean triples offers an unexpected connection with conics. Indeed, it is now natural to consider conics
$$\alpha_{\beta,\gamma}(z)=\{(x,y)\in\mathbb R^2: x^2+2\beta xy-(1-2\gamma)y^2=z\}.$$
From previous notation and remarks, it follows that
\begin{equation}\label{prodwz} (u,v)*_{\beta,\gamma}(s,t)\in\alpha_{\beta,\gamma}(wz),\quad \forall (u,v)\in\alpha_{\beta,\gamma}(w), (s,t)\in\alpha_{\beta,\gamma}(z). \end{equation}
\begin{proposition}
Let $(u,v)^{n_{*_{\beta,\gamma}}}$ be the $n$--th power of $(u,v)$ with respect to $*_{\beta,\gamma}$, then
$$(M_{\beta,\gamma}(u,v))^n=M_{\beta,\gamma}((u,v)^{n_{*_{\beta,\gamma}}}).$$
Consequently, power of matrices of the form \eqref{pythagorean} are still of this form.
\end{proposition}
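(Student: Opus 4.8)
The plan is to prove the displayed identity by induction on $n$, the only input being the multiplicativity relation $M_{\beta,\gamma}(u,v).M_{\beta,\gamma}(s,t)=M_{\beta,\gamma}((u,v)*_{\beta,\gamma}(s,t))$ recorded just above, which says precisely that the map $(u,v)\mapsto M_{\beta,\gamma}(u,v)$ carries the operation $*_{\beta,\gamma}$ to ordinary matrix multiplication. No new computation is therefore needed; the content is purely formal bookkeeping on top of that relation.

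For the base case $n=1$ there is nothing to prove (and for $n=0$ one simply observes that $(u,v)^{0_{*_{\beta,\gamma}}}=(1,0)$, the neutral element of $*_{\beta,\gamma}$, corresponding under $\phi$ to the triple $(1,0,1)$, while $M_{\beta,\gamma}(1,0)=I_3$). For the inductive step, assuming $(M_{\beta,\gamma}(u,v))^n=M_{\beta,\gamma}((u,v)^{n_{*_{\beta,\gamma}}})$, I would write
$$(M_{\beta,\gamma}(u,v))^{n+1}=\big(M_{\beta,\gamma}(u,v)\big)^n.M_{\beta,\gamma}(u,v)=M_{\beta,\gamma}\big((u,v)^{n_{*_{\beta,\gamma}}}\big).M_{\beta,\gamma}(u,v),$$
and then apply the multiplicativity relation to the pair $\big((u,v)^{n_{*_{\beta,\gamma}}},(u,v)\big)$ to obtain $M_{\beta,\gamma}\big((u,v)^{n_{*_{\beta,\gamma}}}*_{\beta,\gamma}(u,v)\big)=M_{\beta,\gamma}\big((u,v)^{(n+1)_{*_{\beta,\gamma}}}\big)$, which closes the induction.

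The ``consequently'' part is then immediate: a matrix of the form \eqref{pythagorean} is, after the substitution $(x,y,z)=\phi(u,v)$, exactly $M_{\beta,\gamma}(u,v)$; since $(u,v)^{n_{*_{\beta,\gamma}}}$ is again a point $(u',v')\in\mathbb R^2$, the $n$-th power equals $M_{\beta,\gamma}(u',v')$, which is again the matrix \eqref{pythagorean} attached to the Pythagorean triple $\phi(u',v')$. The only point that deserves a word of care — the nearest thing to an obstacle — is that the symbol $(u,v)^{n_{*_{\beta,\gamma}}}$ tacitly presumes that $*_{\beta,\gamma}$ is associative (and in fact commutative), so that the iterated power is unambiguous. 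This is not an extra hypothesis: associativity of $*_{\beta,\gamma}$ follows from associativity of matrix multiplication together with the same multiplicativity relation, once one notes that $(u,v)\mapsto M_{\beta,\gamma}(u,v)$ is injective (the quantities $u^2$, $uv$, $v^2$ can be read off from suitable entries), or equivalently from the commutative monoid structure of $(\mathcal P,\bullet_{\beta,\gamma})$ transported through $\phi$. Apart from this remark, the argument is a routine induction whose computational content is entirely absorbed into the already-granted identity $M_{\beta,\gamma}(u,v).M_{\beta,\gamma}(s,t)=M_{\beta,\gamma}((u,v)*_{\beta,\gamma}(s,t))$.
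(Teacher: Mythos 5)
Your induction is exactly the argument the paper leaves implicit -- its proof consists of the single remark that the statement ``directly follows from previous observations,'' i.e.\ from the relation $M_{\beta,\gamma}(u,v).M_{\beta,\gamma}(s,t)=M_{\beta,\gamma}((u,v)*_{\beta,\gamma}(s,t))$ -- so you are taking the same route, just with the bookkeeping written out, and the main argument is correct. One detail in your side remark is wrong, however: the map $(u,v)\mapsto M_{\beta,\gamma}(u,v)$ is \emph{not} injective, since every entry of $M_{\beta,\gamma}(u,v)$ is a homogeneous quadratic form in $u,v$, so $M_{\beta,\gamma}(-u,-v)=M_{\beta,\gamma}(u,v)$ (this two-to-one behaviour is precisely why $\ker\bar\phi=\{(\pm1,0)\}$ later in the paper); hence associativity of $*_{\beta,\gamma}$ cannot be pulled back from matrix multiplication via injectivity as you suggest. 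This does not affect your induction: associativity (and commutativity) of $*_{\beta,\gamma}$ is immediate once one notes that $(u,v)*_{\beta,\gamma}(s,t)$ is exactly the multiplication $(u+vX)(s+tX)$ in $\mathbb R[X]/(X^2-2\beta X-(1-2\gamma))$, or one can invoke the group structure on the conic from \cite{bcm} that the paper recalls immediately after the proposition.
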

\begin{proof}
The proof directly follows from previous observations.
\end{proof}
Conic $\alpha_{\beta,\gamma}(1)$ is especially interesting, due to equation \eqref{prodwz}. In \cite{bcm}, the authors deeply studied the conic
\begin{equation} \label{conic-hd} x^2+hxy-dy^2=1, \end{equation}
which is a group with the operation 
\begin{equation} \label{prod-hd} (u,v)\cdot(s,t)=(tu+svd,tu+sv+tvh). \end{equation}
The identity is the point $(1,0)$ and the inverse of a generic point $(x,y)$ is $(x+hy,-y)$. This conic and this product clearly coincide with $\alpha_{\beta,\gamma}(1)$ and $*_{\beta,\gamma}$ for $h=2\beta$ and $d=1-2\gamma$. Thus, $(\alpha_{\beta,\gamma}(1),*_{\beta,\gamma})$ is a group.
\begin{theorem}
If $\beta^2-2\gamma+1$ is a positive square--free integer, then we have $(u_1-\beta v_1,v_1)\in\alpha_{\beta,\gamma}(1)$, where $(u_1,v_1)$ is the minimal solution of the Pell equation
$$x^2-(\beta^2-2\gamma+1)y^2=1$$
\end{theorem}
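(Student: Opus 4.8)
The plan is to reduce the defining equation of $\alpha_{\beta,\gamma}(1)$ to the stated Pell equation by completing the square. First I would observe that the quadratic form occurring in the definition of $\alpha_{\beta,\gamma}(z)$ factors as
$$x^2 + 2\beta xy - (1-2\gamma)y^2 = (x+\beta y)^2 - (\beta^2 - 2\gamma + 1)\,y^2.$$
Hence, when $\beta$ is an integer, the linear change of variables $X = x + \beta y$, $Y = y$ — invertible over $\mathbb Z$ with inverse $x = X - \beta Y$, $y = Y$ — sends the conic $\alpha_{\beta,\gamma}(1)$ bijectively onto the Pell conic $X^2 - (\beta^2 - 2\gamma + 1)Y^2 = 1$. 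In other words, $(x,y)\in\alpha_{\beta,\gamma}(1)$ if and only if $(x+\beta y,\,y)$ solves that Pell equation.

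Next, let $(u_1,v_1)$ be the minimal solution of $x^2 - (\beta^2 - 2\gamma + 1)y^2 = 1$, which exists precisely because the hypothesis forces $\beta^2 - 2\gamma + 1$ to be a positive non-square. Applying the inverse change of variables to $(X,Y)=(u_1,v_1)$ yields the candidate point $(x,y)=(u_1-\beta v_1,\,v_1)$, and a one-line substitution confirms membership:
$$(u_1-\beta v_1)^2 + 2\beta(u_1-\beta v_1)v_1 - (1-2\gamma)v_1^2 = u_1^2 - (\beta^2 - 2\gamma + 1)v_1^2 = 1,$$
so $(u_1-\beta v_1,\,v_1)\in\alpha_{\beta,\gamma}(1)$, as claimed.

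Since the entire argument is a change of coordinates followed by a trivial verification, there is no genuine obstacle; the only point worth stating with care is the role of the hypothesis on $\beta^2-2\gamma+1$. Its positivity and square-freeness (in particular, its not being a perfect square) are exactly what guarantee the Pell equation has a nontrivial fundamental solution, so that the point produced on $\alpha_{\beta,\gamma}(1)$ is a nontrivial element of the group $(\alpha_{\beta,\gamma}(1),*_{\beta,\gamma})$, and hence — through the correspondence of the previous section between this conic group and Pythagorean triples — can be used to generate infinitely many Pythagorean triples.
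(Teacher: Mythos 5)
Your proposal is correct and rests on exactly the same observation as the paper's proof, namely the identity $x^2+2\beta xy-(1-2\gamma)y^2=(x+\beta y)^2-(\beta^2-2\gamma+1)y^2$; the paper simply states this identity and leaves the substitution of $(u_1,v_1)$ implicit, whereas you carry it out explicitly. No substantive difference in approach.
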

\begin{proof}
The proof directly follows from the equality
$$x^2+2\beta xy-(1-2\gamma)y^2=(x+\beta y)^2-(\beta^2-2\gamma+1)y^2.$$
\end{proof}
As a consequence, the set 
$$N_{\beta,\gamma}=\{(u,v)\in\alpha_{\beta,\gamma}(1): u,v\in\mathbb Z\}$$ 
includes infinite points and $(N_{\beta,\gamma},*_{\beta,\gamma})$ is a group.

Let us consider $\bar \phi=\phi|_{\alpha_{\beta,\gamma}(1)}$, then $\bar \phi$ is a morphism, i.e.,
$$\bar\phi((u,v)*_{\beta,\gamma}(s,t))=\bar\phi(u,v)\bullet_{\beta,\gamma}\bar\phi(s,t),\quad \forall (u,v),(s,t)\in\alpha_{\beta,\gamma}(1).$$
Moreover, $\ker \bar \phi =\{(\pm1,0)\}$ and
$$(Im(\bar\phi),\bullet_{\beta,\gamma})\cong (N_{\beta,\gamma})/\{(\pm1,0)\}.$$
Thus we have the following theorem.
\begin{theorem} \label{pyt-group}
$(Im(\bar\phi),\bullet_{\beta,\gamma})$ is a group of Pythagorean triples and the inverse of the Pythagorean triple $\bar \phi(u,v)$ is 
$$\bar \phi(u+2\beta v, -v)=(-v^2+(u+2\beta v)^2,-2v(u+2\beta v), v^2(u+2\beta v)^2).$$
\end{theorem}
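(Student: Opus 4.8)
The plan is to derive the statement from the morphism property of $\bar\phi$ together with the group structure of $(\alpha_{\beta,\gamma}(1),*_{\beta,\gamma})$ recalled above, essentially via the first isomorphism theorem. First I would record that the parameters $\beta,\gamma$ are integers, being entries of the integer matrices $\nu$, so that both $*_{\beta,\gamma}$, namely $(u,v)*_{\beta,\gamma}(s,t)=(su+tv(1-2\gamma),tu+sv+2\beta tv)$, and the inversion $(x,y)\mapsto(x+2\beta y,-y)$ on $\alpha_{\beta,\gamma}(1)$ (the inverse quoted from \cite{bcm} after setting $h=2\beta$, $d=1-2\gamma$) send integer points to integer points. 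Combined with $(1,0)\in N_{\beta,\gamma}$ and the closure of $\alpha_{\beta,\gamma}(1)$ under $*_{\beta,\gamma}$ coming from \eqref{prodwz} with $w=z=1$, this shows that $N_{\beta,\gamma}$ is a subgroup of $(\alpha_{\beta,\gamma}(1),*_{\beta,\gamma})$.

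Next I would observe that $\bar\phi$ is a group homomorphism from $(N_{\beta,\gamma},*_{\beta,\gamma})$ into the monoid $(\mathcal P,\bullet_{\beta,\gamma})$: it is multiplicative by the morphism property stated just before the theorem, and its image lies in $\mathcal P$ because $\phi(\mathbb Z^2)\subseteq\mathcal P$. Since a homomorphic image of a group is a group, $(Im(\bar\phi),\bullet_{\beta,\gamma})$ is a group; equivalently, by the first isomorphism theorem $Im(\bar\phi)\cong N_{\beta,\gamma}/\ker\bar\phi=N_{\beta,\gamma}/\{(\pm1,0)\}$. As all of its elements are Pythagorean triples, it is a group of Pythagorean triples, and its identity is $\bar\phi(1,0)=(1^2-0^2,\,2\cdot1\cdot0,\,1^2+0^2)=(1,0,1)$, which coincides with the identity of $(\mathcal P,\bullet_{\beta,\gamma})$.

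Finally, to make the inverse explicit I would use that the inverse of $\bar\phi(u,v)$ in $Im(\bar\phi)$ is the image under $\bar\phi$ of the $*_{\beta,\gamma}$-inverse of $(u,v)$, namely $(u+2\beta v,-v)$; applying $\phi$ then gives
\[
\bar\phi(u+2\beta v,-v)=\big((u+2\beta v)^2-v^2,\ -2v(u+2\beta v),\ (u+2\beta v)^2+v^2\big),
\]
which is the triple displayed in the statement. As a check one can verify directly that $(u,v)*_{\beta,\gamma}(u+2\beta v,-v)=(1,0)$ and then invoke the morphism property to get $\bar\phi(u,v)\bullet_{\beta,\gamma}\bar\phi(u+2\beta v,-v)=\bar\phi(1,0)=(1,0,1)$, the other equality following from commutativity of $\bullet_{\beta,\gamma}$.

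The argument is almost entirely bookkeeping, so I do not expect a real obstacle; the two points that need a little care are verifying that integrality is preserved, so that $N_{\beta,\gamma}$ is genuinely a subgroup of the conic group and the quotient is meaningful, and the one-line check that $(u+2\beta v,-v)$ is the $*_{\beta,\gamma}$-inverse of $(u,v)$, which is precisely where the identification with the group of \cite{bcm} enters.
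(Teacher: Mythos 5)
Your proposal is correct and follows essentially the same route as the paper, which derives the theorem directly from the preceding observations: the identification of $(\alpha_{\beta,\gamma}(1),*_{\beta,\gamma})$ with the conic group of \cite{bcm} (giving the inverse $(u+2\beta v,-v)$), the morphism property of $\bar\phi$, $\ker\bar\phi=\{(\pm1,0)\}$, and the resulting isomorphism $(Im(\bar\phi),\bullet_{\beta,\gamma})\cong N_{\beta,\gamma}/\{(\pm1,0)\}$. Your explicit attention to integrality and your third coordinate $(u+2\beta v)^2+v^2$ (correcting an evident typo in the displayed triple of the statement) are both fine.
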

\begin{example}
Let us consider the Pythagorean triple $(3,4,5)$. Since we have $\phi(2,1)=(3,4,5)$, we are interested in conics $\alpha_{\beta,\gamma}(1)$ containing the point $(2,1)$. It is easy to see that $(2,1)\in\alpha_{\beta,-2\beta-1}(1)$, $\forall \beta \in\mathbb R$. However, we only consider $\beta \in\mathbb Z$. By Theorem \eqref{pyt-group}, inverse of $(2,1)$ is $(2(\beta+1),-1)$ and inverse of $(3,4,5)$ is 
$$\bar\phi(2(\beta+1),-1)=(4\beta^2+8\beta+3,-4(\beta+1),4(\beta+1)^2+1).$$

Let us set, e.g., $\beta=1$, the conic $\alpha_{1,-3}(1)$ is
$$x^2+2xy-7y^2=1.$$
The inverse of $(2,1)$ is $(4,-1)$ and inverse of $(3,4,5)$, with respect to $\bullet_{1,-3}$, is $(15,-8,17)$. Indeed,
$$(3,4,5)\bullet_{1,-3}(15,-8,17)=M_{1,-3}(3,4,5).(15,-8,17)=$$
$$=\begin{pmatrix}  -15 & 10 & 18 \cr -26 & 15 & 30 \cr -30 & 18 & 35 \end{pmatrix}\begin{pmatrix} 15 \cr -8 \cr 17 \end{pmatrix}=\begin{pmatrix} 1 \cr 0 \cr 1 \end{pmatrix}.$$
Moreover, we can evaluate powers of Pythagorean triples:
$$(3,4,5)^{2_{\bullet_{1,-3}}}=\begin{pmatrix}  -15 & 10 & 18 \cr -26 & 15 & 30 \cr -30 & 18 & 35 \end{pmatrix}\begin{pmatrix} 3 \cr 4 \cr 5 \end{pmatrix}=\begin{pmatrix} 85 \cr 132 \cr 157 \end{pmatrix}.$$
The Pythagorean triple $(85,132,157)$ corresponds to the point $(2,1)^{2{*_{1,-3}}}=(11,6)\in\alpha_{1,-3}(1)$. Thus, the inverse of $(11,6)$ is $(23,-6)$ and the inverse of $(85,132,157)$ is $\phi(23,-6)=(493,-276,565)$.
\end{example}

\section{Conclusion}\label{sec:conc}
The paper provides new structures of commutative monoids and commutative groups over sets of Pythagorean triples, starting from \emph{natural} injiections between triples and $3 \times 3$ matrices. Moreover, this study has connection with groups over conics. 

As future developments, we find interesting to study operations $*_{\beta,\gamma}$ and $\bullet_{\beta,\gamma}$ over finite fields $\mathbb Z_p$ for $p$ prime. Moreover, this study offers the possibility to use Pythagorean triples in the approximation of quadratic irrationalities. Indeed, in \cite{bcm}, conic \eqref{conic-hd} and product \eqref{prod-hd} have been profitably used in Diophantine approximation for quadratic irrationalities. Thus, the above connections with Pythagorean triples show that they can be used in this context.

\end{document}